\newtheorem{theorem}{Theorem}
\theoremstyle{definition}
\newtheorem{definition}[theorem]{Definition}
\numberwithin{equation}{section}
\numberwithin{theorem}{section}
\begin{document}
	\title{Unbounded Widom factors for orthogonal and residual polynomials}
	
	\author{G\"{o}kalp Alpan}
	\address{Faculty of engineering and natural sciences,
		Sabancı University, İstanbul, Turkey}
	\email{gokalp.alpan@sabanciuniv.edu}

	\subjclass[2010]{Primary 41A17; Secondary 41A44, 42C05, 33C45}
	\keywords{Widom factors, Chebyshev polynomials, orthogonal polynomials, residual polynomials}
	
	\begin{abstract}
        We study Widom factors for (a) monic orthogonal polynomials in $L^2$ with respect to the equilibrium measure of a compact set $K\subset\mathbb{R}$ and (b) residual polynomials normalized at an exterior point. Using weakly equilibrium Cantor sets $K(\gamma)$, we prove:

(1) Given any sequence $(c_n)$ with subexponential growth, there exists a non-polar Cantor set $K(\gamma)$, depending on $(c_n)$, such that the $L^2$ Widom factors of the associated orthogonal polynomials (with respect to the equilibrium measure of $K(\gamma)$) exceed $c_n$ for every $n$.

(2) For the same $K(\gamma)$ built from $(c_n)$ and each exterior point $x_0\in\mathbb{R}\setminus K(\gamma)$, the residual Widom factors satisfy power-type lower bounds with a Harnack–distance exponent $\tau_{x_0}\in(0,1)$: they are bounded below by $c_n^{\tau_{x_0}}$ for all degrees when $x_0$ lies in an unbounded gap, and along a subsequence of degrees when $x_0$ lies in a bounded gap. Consequently, if, in addition, $(c_n)$ is monotone increasing and unbounded, then the sequence of residual Widom factors is unbounded for every $x_0\in\mathbb{R}\setminus K(\gamma)$.

The proofs combine inverse-image constructions, capacity comparisons for period-$n$ sets, harmonic-measure representations for differences of Green functions, and alternation principles on nested approximants.
	\end{abstract}
	
	\date{\today}
	\maketitle
	
	\section{Introduction}
\label{Sec:1}

Let $K\subset\mathbb C$ be a compact set that contains infinitely many points and let $n\in\mathbb N$. Let $\|\cdot\|_K$ denote the sup-norm on $K$. Then the unique monic polynomial $T_{n,K}$ which satisfies
$$\|T_{n,K}\|_K=\inf\{\|P_n\|_K: P_n \mbox{ monic of degree } n\}$$
is called the $n$-th Chebyshev polynomial on $K$; see, e.g., \cite[Theorem 7.5.6]{Davis} for uniqueness. We refer the reader to \cite{NSZ21, SodYud} for expository articles on Chebyshev polynomials. 

Many asymptotic properties and inequalities involving Chebyshev polynomials (and some other extremal polynomials that we discuss in this paper) are expressed in terms of concepts from logarithmic potential theory. Standard references for logarithmic potential theory are \cite{Ran95, ST97}.

Let $\operatorname{Cap}(K)$ denote the logarithmic capacity of $K$. Then (see, e.g., \cite[Theorem 5.5.4 (a)]{Ran95}) 
\begin{equation}\label{firsteq}
\|T_{n,K}\|_K\geq \operatorname{Cap}(K)^n.
\end{equation}
Under the additional assumption that $K\subset \mathbb{R}$, we have (see \cite{Sch08})
\begin{equation*}
\|T_{n,K}\|_K\geq 2 \operatorname{Cap}(K)^n.
\end{equation*}
It is a classical result that (see \cite[Corollary 5.5.5]{Ran95})
\begin{equation}\label{thirdeq}
\lim_{n\rightarrow\infty}\|T_{n,K}\|_K^{1/n}=\operatorname{Cap}(K).
\end{equation}
Assume that $K$ is additionally non-polar, i.e., $\operatorname{Cap}(K)>0$. We define the $n$-th Widom factor for the sup-norm on $K$ by 
\begin{equation*}
W_{\infty,n}{(K)}:= \frac{\|T_{n,K}\|_K}{\operatorname{Cap}(K)^n}. 
\end{equation*}
The term Widom factor was coined by Goncharov and Hatinoğlu in \cite{GonHat15}. Widom factors for the sup-norm (both the above version and corresponding to the weighted sup-norm problem) have been studied extensively, especially in recent years.
It follows from \eqref{firsteq} that they are bounded below by $1$. Depending on $K$, the Widom factors may converge, have finitely many limit points, or even fill a non-trivial compact interval (see \cite[Theorem 8.4]{Wid69} for examples). 

For a non-polar compact set $K\subset \mathbb C$, let $\mu_K$ denote its equilibrium measure. Let $\Omega_K$ denote the connected component of $\overline{\mathbb C}\setminus K$ that contains the point $\infty$. Let $g_K$ denote the Green function with a pole at infinity on $\Omega_K$ (see \cite[Eq. (I.4.8)]{ST97}), 
\begin{equation}\label{grnf}
		g_K(z) = -\log\operatorname{Cap}(K)+\int\log|z-\zeta|\,d\mu_K(\zeta), \quad z\in\mathbb C.
\end{equation}
If $\Omega_K$ is regular with respect to the Dirichlet problem and $\sum_j g_K(c_j)<\infty$, where the $c_j$ are the critical points of $g_K$ in $\Omega_K\setminus\{\infty\}$, then $\Omega_K$ is called a Parreau-Widom domain. If $\Omega_K$ is a Parreau-Widom domain and $K\subset\mathbb{R}$, then $K$ is called a Parreau-Widom set. These sets are important in the modern theory of Widom factors, as they form a broad class for which many results are available. If $K$ is a Parreau-Widom set then $\left(W_{\infty,n}{(K)}\right)_{n=1}^\infty$ is bounded above, see \cite[Theorem 1.4]{CSZ17}. Note that there are certain Cantor sets (by a Cantor set we mean a totally disconnected and perfect compact set)  that are Parreau-Widom, see \cite[p. 125]{PehYud03} and \cite[Sec. 7]{AlpGon17}. We should also note that a Parreau-Widom set has positive Lebesgue measure, see \cite[p. 406]{SodYud2}. This is relevant to our work because the Cantor sets that we consider in Section 2 have the same Cantor set construction used in \cite[Sec. 7]{AlpGon17} but here we choose the parameters so that the resulting Cantor set has Lebesgue measure zero. Indeed, this was also the approach used in \cite{GonHat15}.

We say that a sequence ${(c_n)}_{n=1}^\infty$ with $c_n\geq 1$ for $n\in\mathbb N$ has (at most) subexponential growth if $\frac{\log c_n}{n}\rightarrow 0$ as $n\rightarrow\infty$. The limit result \eqref{thirdeq} can be rewritten as $\lim_{n\rightarrow\infty} {W_{\infty,n}{(K)}}^{1/n}=1$, so combining this and \eqref{firsteq}, we see that the sequence of Widom factors on a non-polar compact set $K$ for the sup-norm has subexponential growth. Despite this theoretical constraint, it was shown in \cite[Theorem 4.4]{GonHat15} that for every sequence $(c_n)_{n=1}^\infty$ with subexponential growth, there is a non-polar Cantor set $K(\gamma)$ such that $W_{\infty,n}{(K(\gamma))}>c_n$ and, in particular, they gave many examples of unbounded Widom factors for the sup-norm. 

Our main objective is to extend this result to the $L^2$ case and to obtain a weaker analogue for residual polynomials; we also discuss these notions in the introduction below. We use the same Cantor set construction to prove these results, see Section 2 below.

We refer the reader to \cite{Alp22, AZ20b, AZ24, AndNaz18, CR24, CER24, CSYZ19, CSZ17, CSZ3, CSZ4, CSZ6, GonHat15, Sch08, SZ21, Tot11, Tot14, Wid69} for a more detailed account of results on Widom factors for the sup-norm.

For a finite positive Borel measure $\mu$ with compact non-polar support $\operatorname{supp}(\mu)$ in $\mathbb C$, we can define monic orthogonal polynomials as follows. Let $n\in\mathbb N$. Then the unique monic polynomial $Q_n(\cdot;\mu)$ satisfying
\begin{equation}\label{min ort}
\|Q_n(\cdot;\mu)\|_{L^2(\mu)}=\inf\{\|P_n\|_{L^2(\mu)}: P_n \mbox{ monic of degree } n\}
\end{equation} 
is called the $n$-th monic orthogonal polynomial for $\mu$. 
For such a measure $\mu$, we define the $n$-th Widom factor for $\mu$ as 

\begin{equation*}
W_{2,n}{(\mu)}:= \frac{\|Q_n(\cdot;\mu)\|_{L^2(\mu)}}{\operatorname{Cap}(\operatorname{supp}\mu)^n}. 
\end{equation*}

For all $n$, we have (see \cite[Corollary 1.3]{Alp19})

\begin{equation}\label{lower equi}
W_{2,n}{(\mu_K)}\geq 1.
\end{equation}

Let $L=\operatorname{supp}(\mu_K)$. By the definition of the equilibrium measure, $L\subset K$, and we also have (see \cite[Lemma 1.2.7]{ST92})
\begin{equation}\label{eq cap}
\operatorname{Cap}(L)= \operatorname{Cap}(K).
\end{equation}

 Since $\mu_K$ is a unit measure and the norm of the $n$-th Chebyshev polynomial increases on larger sets, comparing the $L^2$ and $L^\infty$ norms of $T_{n,L}$ and using minimality of orthogonal polynomials \eqref{min ort}, we obtain

\begin{equation}\label{long ineq}
\|T_{n,K}\|_K \geq \|T_{n,L}\|_L \geq \|T_{n,L}\|_{L^2(\mu_K)} \geq \|Q_n(\cdot;\mu_K)\|_{L^2(\mu_K)}.
\end{equation}
In view of \eqref{eq cap}, the inequality \eqref{long ineq} implies that
\begin{equation}\label{ineq wid l2}
W_{\infty,n}(K)\geq W_{2,n}(\mu_K).
\end{equation}
Combining \eqref{ineq wid l2}, \eqref{lower equi} and \eqref{thirdeq}, we see that $(W_{2,n}(\mu_K))^{1/n}\rightarrow 1$ as $n\rightarrow\infty$ (originally this was proved in \cite{Wid67}), and therefore $(W_{2,n}(\mu_K))_{n=1}^\infty$ has subexponential growth. 

The first main result of the paper is Theorem \ref{t1}, where we show that for any sequence $(c_n)_{n=1}^\infty$ with subexponential growth there is a non-polar Cantor set $K(\gamma)$ such that $W_{2,n}(\mu_{K(\gamma)})\geq c_n$. Note that this result does not follow from \cite{GonHat15} because of \eqref{ineq wid l2}.

We refer the reader to \cite{Alp22, AZ20a, AZ20b, AZ24, Chr12, CSZ11} for more on Widom factors for orthogonal polynomials.

Let $K$ be a non-polar compact subset of $\mathbb R$. Then $\mathbb{R}\setminus K$ can be written as a disjoint union of its open connected components $(\alpha_j, \beta_j)$, called the gaps of $K$.  Let $x_{-}:=\inf K$ and $x_+:= \sup K$. Then $\mathbb R\setminus [x_{-}, x_+]$ contains two unbounded gaps $(-\infty,x_{-})$ and $(x_+,\infty)$. We call the remaining gaps bounded gaps of $K$. Let $x_0\in \mathbb R\setminus K$ and $n\in \mathbb N$. Then the $n$-th residual polynomial, denoted by $R_{n,K}^{(x_0)}$, is the unique polynomial (see \cite[Theorem 2.1]{CSZ5} for uniqueness) of degree \emph{at most} $n$ that minimizes $\|P\|_K$ among all polynomials of degree at most $n$ satisfying $P(x_0)=1$. Under these conditions, $\deg R_{n,K}^{(x_0)}$ is either (see \cite[Theorem 3.7(b)]{CSZ5}) $n$ or $n-1$. 

We have (see, e.g., \cite[p.~14]{Sch17} or \cite{dris, Kuij, Sch11, CSZ5}, as these results appear many times in the literature)

\begin{equation}\label{res eq1}
\lim_{n\rightarrow\infty} \|R_{n,K}^{(x_0)} \|_K^{1/n}= e^{-g_K(x_0)},
\end{equation}
and
\begin{equation}\label{res eq2}
\|R_{n,K}^{(x_0)} \|_K\geq e^{-ng_K(x_0)}.
\end{equation}
Following \cite{CSZ6}, we define the $n$-th Widom factor for the residual polynomials at $x_0$ on $K$ by
\begin{equation*}
W_{\infty,n}^{(x_0)}(K)=\frac{\|R_{n,K}^{(x_0)} \|_K} {e^{-ng_K(x_0)}}.
\end{equation*}
Thus, in view of \eqref{res eq1} and \eqref{res eq2}, we have $W_{\infty,n}^{(x_0)}(K)\geq 1$ and $(W_{\infty,n}^{(x_0)}(K))_{n=1}^\infty$ has subexponential growth. In addition, $W_{\infty,n}^{(x_0)}(K)\rightarrow W_{\infty,n}(K)$ as $x_0\rightarrow\infty$ (see \cite[Eq. (1.16)]{CSZ6}). Hence, it is natural to view Widom factors for residual polynomials and the sup-norm in a unified way as in \cite{buch, CSZ6}. For more on Widom factors of the residual polynomials, we refer the reader to \cite{buch, CSZ5, CSZ6}. We refer the reader to \cite{char, dris, ech, Kuij} for other applications of residual polynomials.

Our second main result is Theorem \ref{t2}. Given a monotone increasing sequence $(c_n)_{n=1}^\infty$ with subexponential growth, there exists a non-polar Cantor set $K(\gamma)$ such that for each exterior point $x_0\in\mathbb{R}\setminus K(\gamma)$ the residual Widom factors admit power-type lower bounds with an exponent $\tau_{x_0}\in(0,1)$ (specified later): if $x_0$ lies in an unbounded gap of $K(\gamma)$, then $W_{\infty,n}^{(x_0)}(K(\gamma))$ is bounded below by $c_n^{\tau_{x_0}}$ for all $n$; if $x_0$ lies in a bounded gap, then there exists a subsequence $n_k\to\infty$ such that $W_{\infty,n_k}^{(x_0)}(K(\gamma))$ is bounded below by $c_{n_k}^{\tau_{x_0}}$. This complements the $L^2$ result above and quantifies the growth via a geometric exponent.

In Section~2 we present the construction of the Cantor sets $K(\gamma)$, prove Theorem~\ref{t1}, and then prove Theorem~\ref{t2} on residual Widom factors, covering all degrees when $x_0$ lies in an unbounded gap, and degrees $n=2^s$ (for large $s$) when $x_0\in(0,1)\setminus K(\gamma)$.
\section{Unbounded Widom factors on $K(\gamma)$}
Throughout we assume that $K\subset \mathbb R$ is a compact non-polar set. Define
\begin{equation}\label{kn def}
K_n=\{z\in \mathbb C: T_{n,K}(z)\in [-\|T_{n,K}\|_K,\|T_{n,K}\|_K ]  \}
  \end{equation}
Then (see \cite[Section 2]{CSZ17}) $K_n$ is a subset of the real line, $K\subset K_n$, and $\|T_{n,K}\|_K= 2\operatorname{Cap}(K_n)^n \geq 2\operatorname{Cap}(K)^n.$ An immediate consequence of this is the expression 
\begin{equation}\label{somein}
W_{\infty,n}(K)=\frac{2\operatorname{Cap}(K_n)^n}{\operatorname{Cap}(K)^n}
\end{equation}

A compact set $S\subset \mathbb R$ is called a finite gap set if $S=\cup_{k=1}^j [a_k, b_k]$, where $j\geq 1$ and the sets $[a_k, b_k]$ are non-trivial compact disjoint intervals. A finite gap set $S$ is called a period-$n$ set if each connected component of $S$ has equilibrium measure $k/n$ for some $k\in\{1,\ldots, n\}$. There are many characterizations of these sets, see \cite[Propositions 1.1 and 1.3]{Peh1} and  \cite[Theorem 5.5.25]{Sim11}. Note that $K_n$ in \eqref{kn def} is always a period-$n$ set, see \cite[Theorem 2.5]{CSZ17}.


Let $x_0\in \mathbb R\setminus K$. Then a polynomial $P$ of degree at most $n$ has an $x_0$-alternating set on $K$ if there exist $n+1$ points $x_1,\ldots,x_{n+1}$ in $K$ such that
\[
x_1<x_2<\cdots<x_k<x_0<x_{k+1}<\cdots<x_{n+1}
\]
for some $k\in\{1,\ldots,n+1\}$ and
\begin{equation}
P(x_j)=(-1)^{k+1-j}\,\operatorname{sign}(x_j-x_0)\,\|P\|_K,\quad j=1,\ldots,n+1.
\end{equation}
By \cite[Theorem~3.5]{CSZ5}, $R_{n,K}^{(x_0)}$ has an $x_0$-alternating set on $K$, and if $P$ is a polynomial of degree at most $n$ that has an $x_0$-alternating set on $K$, with $P(x_0)=1$, then $P=R_{n,K}^{(x_0)}$.

Let $|\cdot|$ denote the Lebesgue measure of a set on $\mathbb{R}$. 
We recall the construction of $K(\gamma)$ from \cite{Gonc14}. Let $\gamma:=(\gamma_s)_{s=1}^\infty$ be a sequence such that $0<\gamma_s<1/4$ for all $s\in \mathbb N$. Let $r_0:=1$ and define
\begin{equation}\label{rs def}
r_s:= \gamma_s r_{s-1}^2= \gamma_s \gamma_{s-1}^2\gamma_{s-2}^4\cdots \gamma_1^{2^{s-1}}
\end{equation}
recursively. Let $P_2(x)= x(x-1)$ and $P_{2^{s+1}}(x):=P_{2^s}(x)(P_{2^s}(x)+r_s)$ be defined recursively as well. Let $E_0=[0,1]$.
For each $s\in \mathbb N$, define
\begin{equation}\label{inv eq}
E_s:= \left(\frac{2}{r_s}P_{2^s}+1\right)^{-1}{([-1,1])}= \cup_{j=1}^{2^s} I_{j,s}
\end{equation}
where $I_{j,s}$ are disjoint closed non-trivial intervals in $[0,1]$ ordered from left to right. Here, $I_{2j-1,s+1}$ and $I_{2j,s+1}$ are proper subsets of $I_{j,s}$, and by \cite[Lemma 2]{Gonc14},\\ $\max_{1\leq j\leq 2^s} |I_{j,s}|\rightarrow 0$ as $s\rightarrow\infty$. In addition, $E_{s}\subset E_{s-1}$ for each $s\in \mathbb N$ and therefore $K(\gamma):=\bigcap_{s=0}^\infty E_s$ is a Cantor set and $\{0,1\}\subset K(\gamma)$ regardless of the choice of $\gamma$. 

Note that $T_{2^s,K(\gamma)}=P_{2^s}+r_s/2$, see \cite[Proposition 1]{Gonc14}. Here $\|T_{2^s,K(\gamma)}\|_{K(\gamma)}=r_s/2$, and 
\begin{equation}\label{es def2}
E_s= \{z\in \mathbb C: T_{2^s,K(\gamma)}(z)\in [-\|T_{2^s,K(\gamma)}\|_{K(\gamma)},\|T_{2^s,K(\gamma)}\|_{K(\gamma)} ]  \}. 
\end{equation}
So $E_s$ is a period-$2^s$ set containing $K(\gamma)$. If $x_0\in  (0,1)\setminus K(\gamma)$ then there exists $s_0\in \mathbb N$ such that $x_0\in E_{s_0-1}$ but $x_0\not\in E_{s}$ for all $s\geq s_0$. Then, in view of \cite[Example 4.6]{CSZ5} and \eqref{inv eq}, for each $s\geq s_0$, we have 
\begin{equation}\label{some new}
R_{2^s, E_s}^{(x_0)}= \left(\frac{2}{r_s}P_{2^s}(x_0)+1\right)^{-1}\cdot\left(\frac{2}{r_s}P_{2^s}+1\right)=\frac{T_{2^s,K(\gamma)}}{T_{2^s,K(\gamma)}(x_0)}.
\end{equation}
Here, at each point $x_k$ ($k=1,\ldots, 2^s+1$) in the $x_0$-alternating set on $E_s$, we have $\left(\frac{2}{r_s}P_{2^s}(x_k)+1\right)=\pm 1$. The solutions of the last equation are just the endpoints of $I_{j,s}$ for $j=1,\ldots, 2^s$. These endpoints are also elements of $K(\gamma)$ by the Cantor set construction. Hence, the $x_0$-alternating set $\{x_k\}_{k=1}^{2^s+1}$ for $E_s$ is also an $x_0$-alternating set for $K(\gamma)$. Therefore $R_{2^s, E_s}^{(x_0)}=R_{2^s, K(\gamma)}^{(x_0)}$ holds, and by \eqref{some new}, 
\begin{equation}\label{another some new}
R_{2^s, K(\gamma)}^{(x_0)}=\frac{T_{2^s,K(\gamma)}}{T_{2^s,K(\gamma)}(x_0)}.
\end{equation}

Since $\operatorname{Cap}([-1,1])=1/2$, in view of \cite[Theorem 5.2.5]{Ran95}, \eqref{inv eq} and \eqref{rs def}, we get
\begin{equation}\label{cap es}
\operatorname{Cap}(E_s)=\left(\frac{r_s}{4}\right)^{1/2^s}= {\left(\frac{1}{4}\right)}^{1/2^s}\gamma_1^{1/2}\gamma_2^{1/4}\cdots \gamma_s^{1/2^s}.
\end{equation}
Since $E_s\downarrow K(\gamma)$, it follows that $\lim_{s\rightarrow\infty} \operatorname{Cap}(E_s)= \operatorname{Cap}(K(\gamma))$ in view of \cite[Theorem 5.1.3 (a)]{Ran95}. Using \eqref{cap es}, we see that
\begin{equation*}
\operatorname{Cap}(K(\gamma)) =\exp\left[\sum_{n=1}^\infty 2^{-n}\log{{\gamma_n}}\right].
\end{equation*}
Therefore $K(\gamma)$ is non-polar if and only if 
\begin{equation*}
\exp\left[\sum_{n=1}^\infty 2^{-n}\log{{\gamma_n}}\right]>0.
\end{equation*}
In addition to non-polarity, it was assumed in  \cite[Section 5]{AlpGon16} that $\gamma_s\leq 1/6$ for all $s$, so that $K(\gamma)$ and Widom factors for orthogonal polynomials associated with $\mu_{K(\gamma)}$ satisfy additional properties. Under these assumptions, (see \cite[Remark 4.8]{AlpGon16}) $|K(\gamma)|=0$, and for any $s\in \mathbb N\cup \{0\}$ (see \cite[Eq. (5.3)]{AlpGon16})
\begin{equation}\label{wid weak 1}
 W_{2,2^s}(\mu_{K(\gamma)})=\frac{\sqrt{1-2\gamma_{s+1}}}{2 \exp[\sum_{k=s+1}^\infty 2^{s-k}\log{\gamma_k}]}.
\end{equation}

For any $n\in \mathbb N$, let $s$ be the integer in $\mathbb N\cup \{0\}$  satisfying $2^s\leq n<2^{s+1}.$ Then (see the proof of \cite[Theorem 5.1 (a)]{AlpGon16})
\begin{equation}\label{wid weak 2}
\min_{2^s\leq n<2^{s+1}} W_{2,n}(\mu_{K(\gamma)})= W_{2,2^s}(\mu_{K(\gamma)}).
\end{equation}
Indeed, these assumptions lead to unbounded Widom factors, see \cite[Theorem 5.1]{AlpGon16} but our aim is much more than that as we want to control how quickly they get larger. 

Now, we verify some facts involving sequences with subexponential growth. Let $(c_n)_{n=1}^\infty$ be a sequence with subexponential growth. Adding $e-1$ to each term yields another sequence with subexponential growth. Thus, we may assume $c_n\ge e$. 

Assume $c_n\ge e$ and $\frac{\log c_n}{n}\to0$. Then we can construct ${(s_n)}_{n=1}^\infty$ with:
(i) $s_n\ge c_n$, (ii) $(s_n)_{n=1}^\infty$ is monotone increasing, (iii) $\frac{\log s_n}{n}\downarrow 0$. This is already in the proof of \cite[Lemma 4.3]{GonHat15}, but we expand the argument for the reader’s convenience.
Let $u_n:=\sup_{1\le k\le n} c_k$. Then $u_n\ge c_n$ and $(u_n)_{n=1}^\infty$ is monotone increasing. For any $\varepsilon>0$ choose $K$ with $c_k\le e^{\varepsilon k}$ for $k\ge K$, and set $M:=\max_{1\le k<K} c_k$. For $n\ge K$,
\[
u_n=\sup_{k\le n}c_k\le \max\{M,e^{\varepsilon n}\}\ \Rightarrow\ \frac{\log u_n}{n}\le \max\!\Big\{\frac{\log M}{n},\varepsilon\Big\}.
\]
Since $\varepsilon$ is arbitrary, $\frac{\log u_n}{n}\rightarrow 0$.
Write $u_n=e^{n\alpha_n}$ for $n\geq 1$ with $\alpha_n:=\frac{1}{n}\log u_n\to0$.
Let $\alpha_n^*:=\sup_{k\ge n}\alpha_k$ so $\alpha_n^*\downarrow0$ and define
\[
s_n:=\sup_{1\le k\le n} e^{k\alpha_k^*}.
\]

Then
(i) $s_n\ge e^{n\alpha_n^*}\ge e^{n\alpha_n}=u_n\ge c_n$. \quad
(ii) Monotonicity follows because enlarging the domain of the supremum never lowers its value.
 \quad
(iii) With $t_n:=\frac{1}{n}\log s_n=\frac{1}{n}\max_{k\le n}k\alpha_k^*$, we have $t_{n+1}\le\max\{\tfrac{n}{n+1}t_n,\alpha_{n+1}^*\}\le t_n$, so $(t_n)_{n=1}^\infty$ is monotone decreasing. Given $\varepsilon>0$, pick $N$ with $\alpha_N^*<\varepsilon$ and set $C:=\max_{k\le N}k\alpha_k^*$. For $n\ge N$,
$t_n\le \max\{C/n,\varepsilon\}$. Thus $\frac{1}{n}\log s_n=t_n\downarrow 0$.

In view of this discussion, we give a name to sequences such as $(s_n)_{n=1}^\infty$.
\begin{definition}
We say that a sequence $(c_n)_{n=1}^\infty$ is a regular sequence with subexponential growth if
$(i)$ $c_n\ge e$, $(ii)$ $(c_n)_{n=1}^\infty$ is monotone increasing, $(iii)$ $\frac{\log c_n}{n}\downarrow 0$.
\end{definition}
Now, we are ready to prove our first result.
\begin{theorem}\label{t1}
Let $(c_n)_{n=1}^\infty$, with $c_n\geq 1$ for each $n$,  be a sequence with subexponential growth. Then there exists $K(\gamma)$ that is non-polar and 
\begin{equation}\label{lower sub 1}
W_{2,n}(\mu_{K(\gamma)})\geq c_n\quad\text{for all } n\in\mathbb{N}.
\end{equation}

\end{theorem}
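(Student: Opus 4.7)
The plan is to construct $(\gamma_s)_{s=1}^\infty$ directly from $(c_n)$ using the explicit formula \eqref{wid weak 1} for $W_{2,2^s}(\mu_{K(\gamma)})$ together with the minimization identity \eqref{wid weak 2}. First I would pass from $(c_n)$ to a regular sequence with subexponential growth that dominates it, as recalled before the theorem; any lower bound for the larger sequence yields the desired one for $(c_n)$. So from now on I assume $(c_n)$ is monotone increasing with $c_n\ge e$ and $\log c_n/n\downarrow 0$. Imposing $\gamma_s\le 1/6$ for all $s$ (so that \eqref{wid weak 1} and \eqref{wid weak 2} apply), the bound $W_{2,n}(\mu_{K(\gamma)})\ge c_n$ for every $n$ reduces by \eqref{wid weak 2} and monotonicity of $(c_n)$ to verifying
\[
W_{2,2^s}(\mu_{K(\gamma)})\ge c_{2^{s+1}-1}\qquad\text{for every } s\ge 0.
\]

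Taking logarithms in \eqref{wid weak 1} and using $\tfrac12\log(1-2\gamma_{s+1})\ge -\tfrac12\log(3/2)$ (valid since $\gamma_{s+1}\le 1/6$), the last inequality is implied by
\[
\sum_{k=s+1}^\infty 2^{-k}|\log\gamma_k|\;\ge\; A_s,\qquad A_s:=2^{-s}\!\left[\log c_{2^{s+1}-1}+\log 2+\tfrac{1}{2}\log(3/2)\right].
\]
Subexponential growth of $(c_n)$ gives $A_s\to 0$. The problem is now to choose $0<\gamma_k\le 1/6$ whose tail sums $\sum_{k>s}2^{-k}|\log\gamma_k|$ dominate $A_s$ for every $s\ge 0$, while the full sum $\sum_{k\ge 1}2^{-k}|\log\gamma_k|$ remains finite so that $K(\gamma)$ is non-polar.

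I would carry this out by a telescoping construction: set $B_s:=\sup_{t\ge s}A_t$, so $B_s\downarrow 0$ and $B_s\ge A_s$, and define
\[
\gamma_k:=\tfrac{1}{6}\exp\!\bigl(-2^k(B_{k-1}-B_k)\bigr),
\]
equivalently $2^{-k}|\log\gamma_k|=(B_{k-1}-B_k)+2^{-k}\log 6$. Then $0<\gamma_k\le 1/6$, and telescoping yields
\[
\sum_{k=s+1}^\infty 2^{-k}|\log\gamma_k|=B_s+2^{-s}\log 6\ge A_s,
\]
which is the required tail bound, while $\sum_{k\ge 1}2^{-k}|\log\gamma_k|=B_0+\log 6<\infty$ ensures non-polarity of $K(\gamma)$. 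Composing the reductions proves the theorem.

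The main point of the argument is the balance between the tail lower bound (needed for large Widom factors) and finiteness of the total sum (needed for non-polarity). The telescoping construction threads this needle precisely because $A_s\to 0$, which is the content of subexponential growth of $(c_n)$ and the critical threshold at which the two demands remain compatible. The constraint $\gamma_s\le 1/6$ is absorbed by the explicit $2^{-k}\log 6$ correction, which contributes only a constant to the total and a harmless $2^{-s}\log 6$ surplus to each tail.
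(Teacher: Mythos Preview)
Your proof is correct and follows essentially the same strategy as the paper: reduce to a regular sequence, invoke \eqref{wid weak 1}--\eqref{wid weak 2}, and choose the $\gamma_k$ by a telescoping scheme so that the tail sums $\sum_{k>s}2^{-k}|\log\gamma_k|$ dominate the required threshold while the full sum stays finite for non-polarity. The paper simply takes the explicit closed form $\gamma_n=c_{2^{n+1}}/(6c_{2^n}^2)$ (regularity gives $\gamma_n\le 1/6$, and the products telescope directly to yield $W_{2,2^s}(\mu_{K(\gamma)})\ge\sqrt{6}\,c_{2^{s+1}}$), whereas you reach the same endpoint through the auxiliary majorant $B_s=\sup_{t\ge s}A_t$.
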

\begin{proof}
Without loss of generality, we can assume that ${(c_n)}_{n=1}^\infty$ is a regular sequence with subexponential growth.

For each $n\in \mathbb N$, let 
\begin{equation}\label{gamma def}
\gamma_n=\frac{c_{2^{n+1}}}{6 c_{2^n}^2}
\end{equation} 
By our regularity assumption, $\frac{\log{c_{2^{n+1}}}}{2^{n+1}}\leq \frac{\log{c_{2^n}}}{2^n}$. Hence $c_{2^{n+1}}^{1/2^{n+1}}\leq c_{2^{n}}^{1/2^{n}}$ and $c_{2^{n+1}}\leq c_{2^n}^2$. Substituting this into \eqref{gamma def}, we see that 
\begin{equation*}
\gamma_n\le \frac{1}{6}\qquad\text{for all }n.
\end{equation*}
For any $s\in \mathbb{N}$, substituting \eqref{gamma def} into \eqref{cap es}, we get
\begin{equation}\label{long form}
\displaystyle\operatorname{Cap}(E_s)=\left(\frac{1}{4}\right)^{1/2^s}\left(\frac{1}{6}\right)^{\sum_{k=1}^s (1/2)^k}\frac{c_{2^{s+1}}^{1/2^s}}{c_2}.
\end{equation}
Since $(c_n)_{n=1}^\infty$ has subexponential growth, $c_{2^{s+1}}^{1/2^s}\rightarrow 1$. Hence,  using \eqref{long form}, we deduce that
\begin{equation*}
\operatorname{Cap}(K(\gamma))=\lim_{s\rightarrow\infty} \operatorname{Cap}(E_s)= \frac{1}{6c_2}>0.
\end{equation*}
Hence $K(\gamma)$ is non-polar and $\gamma_n\leq 1/6$ for all $n$. So, we can use both \eqref{wid weak 1} and \eqref{wid weak 2}.

Now, it remains to prove \eqref{lower sub 1}.
Let $s\in\mathbb{N}\cup\{0\}.$ Since $\gamma_{s+1}\leq 1/6$, we have $(1-{2\gamma_{s+1}})/4\geq 1/6.$ Hence, using this in \eqref{wid weak 1} and squaring both sides, we see that
\begin{equation}\label{mid wid ineq}
\left[W_{2,2^s}(\mu_{K(\gamma)})\right]^2\geq \frac{1}{6}\cdot\frac{1}{\exp{\left[\sum_{k=s}^\infty 2^{s-k}\log{\gamma_{k+1}}\right]}}.
\end{equation}
For $m>s$, in view of \eqref{gamma def}, we have
\begin{align}
\exp{\left[\sum_{k=s}^m 2^{s-k}\log{\gamma_{k+1}}\right]}&=\gamma_{s+1}\cdot\gamma_{s+2}^{1/2}\cdots \gamma_{m+1}^{1/2^{m-s}}\\
&=\displaystyle\left( \frac{1}{6}\right)^{\sum_{k=0}^{m-s} {2^{-k}}}\frac{\left(c_{2^{m+2}} \right)^{1/2^{m-s}}}{c_{2^{s+1}}^2}.\label{ryr}
\end{align}
Fixing $s$ and letting $m\rightarrow\infty$, we see that $\left(c_{2^{m+2}} \right)^{1/2^{m-s}}\rightarrow 1$ since $(c_n)_{n=1}^\infty$ has subexponential growth. Now, using this in \eqref{ryr}, while fixing $s$ and letting $m\rightarrow \infty$, we obtain
\begin{equation*}
\exp{\left[\sum_{k=s}^\infty 2^{s-k}\log{\gamma_{k+1}}\right]}=\frac{1}{36\cdot c_{2^{s+1}}^2}.
\end{equation*}
Substituting this into \eqref{mid wid ineq}, we deduce that
\begin{equation*}
\left[W_{2,2^s}(\mu_{K(\gamma)})\right]^2\geq 6c_{2^{s+1}}^2,
\end{equation*}
and thus
\begin{equation}\label{finalish wid}
W_{2,2^s}(\mu_{K(\gamma)})\geq \sqrt{6}c_{2^{s+1}}.
\end{equation}
Fix $n\in\mathbb N$ and let $s$ be the integer satisfying $2^s\leq n<2^{s+1}$.
Combining \eqref{wid weak 2}, \eqref{finalish wid} and the assumption that $(c_n)_{n=1}^\infty$ is monotone increasing, we see that
\begin{equation}\label{rrrr1}
W_{2,n}(\mu_{K(\gamma)})\geq W_{2,2^s}(\mu_{K(\gamma)})\geq \sqrt{6} c_{2^{s+1}}\geq \sqrt{6} c_n,
\end{equation}
which implies \eqref{lower sub 1} and completes the proof.
\end{proof}

For any domain $\Omega\subset \overline{\mathbb C}$ and $z,w\in \Omega$, the Harnack distance between $z$ and $w$ is the smallest number $\tau_\Omega(z,w)$ such that for every (strictly) positive harmonic function $h:\Omega\rightarrow \mathbb R$,
\begin{equation*}
(\tau_\Omega(z,w))^{-1} h(w)\leq h(z)\leq \tau_\Omega(z,w) h(w)
\end{equation*}
holds. It exists, satisfies $\tau_\Omega(z,w)= \tau_\Omega(w,z)$, and $\tau_\Omega(z,w)\geq 1$ since positive constants are harmonic, see \cite[Section 1.3]{Ran95}. In addition, $\tau_\Omega(z,w)\rightarrow 1$ as $z\rightarrow w$, see the proof of \cite[Theorem 1.3.8]{Ran95}.

For any domain $\Omega\subset \overline{\mathbb C}$ containing $\infty$ with non-polar compact boundary $\partial \Omega$ in $\mathbb R$, we denote the harmonic measure defined at $z\in \Omega$ by $\omega_\Omega (\cdot; z)$. In this case, the equilibrium measure of the boundary is given by $\mu_{\partial \Omega}(\cdot)= \omega_\Omega (\cdot; \infty)$, see \cite[Theorem 4.3.14]{Ran95}. Moreover, $\mu_{\partial \Omega}(\cdot)$ and $\omega_\Omega (\cdot; z)$ are mutually absolutely continuous, see \cite[Corollary 4.3.5]{Ran95}.
Now, we are ready to prove our second main result.

\begin{theorem}\label{t2}
Let $(c_n)_{n=1}^\infty$ be a regular sequence with subexponential growth. Then there is a non-polar Cantor set $K(\gamma)$ such that
\begin{enumerate}[$(i)$]
\item if $x_0$ lies in an unbounded gap of $K(\gamma)$, then for all $n\in\mathbb{N}$,
\begin{equation}\label{lower sub 11}
W_{\infty,n}^{(x_0)}(K(\gamma))\geq c_n^ {{\tau_{x_0}}} 
\end{equation}
holds,
\item if $x_0\in(0,1)\setminus K(\gamma)$, then there exists an integer $s_0$ such that for all $s\ge s_0$,
\begin{equation}\label{lower sub 111}
W_{\infty,2^s}^{(x_0)}(K(\gamma))\geq c_{2^s}^ {{\tau_{x_0}}} 
\end{equation}
holds,
\end{enumerate}
where we define $\tau_{x_0}$ as follows: Let $L_{x_0}$ be the connected component of $\mathbb R\setminus K(\gamma)$ containing $x_0$. Let $\Omega_{x_0}:= (\overline{\mathbb C}\setminus [0,1]) \cup L_{x_0}$. Define $\tau_{x_0}:= \frac{1}{\tau_{\Omega_{x_0}}(x_0,\infty)}.$
In particular, if $(c_n)_{n=1}^\infty$ is unbounded then there exists a non-polar Cantor set $K(\gamma)$ such that $\sup_n W_{\infty,n}^{(x_0)}(K(\gamma))=\infty$ holds for all $x_0\in\mathbb R\setminus K(\gamma)$.
\end{theorem}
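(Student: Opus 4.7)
The plan is to reuse the Cantor set $K(\gamma)$ built in the proof of Theorem~\ref{t1} with $\gamma_n := c_{2^{n+1}}/(6c_{2^n}^2)$: by that proof, $K(\gamma)$ is non-polar with $\gamma_n \le 1/6$, and the telescoping behind \eqref{long form} upgrades to the capacity identity
\[
\Bigl[\frac{\operatorname{Cap}(E_s)}{\operatorname{Cap}(K(\gamma))}\Bigr]^{2^s} = \frac{3\, c_{2^{s+1}}}{2}.
\]
The common starting point for both (i) and (ii) is the closed form \eqref{another some new}. Fix $x_0 \in \mathbb{R}\setminus K(\gamma)$ and let $s_0 = s_0(x_0)$ be the least integer with $x_0 \notin E_s$ for every $s \ge s_0$. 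For such $s$, since $E_s$ is a period-$2^s$ set on which $T_{2^s,K(\gamma)}$ maps onto $[-r_s/2, r_s/2]$, the Joukowski-type identity $|T_{2^s,K(\gamma)}(x_0)| = (r_s/2)\cosh(2^s g_{E_s}(x_0))$ holds for real $x_0 \notin E_s$. Combined with \eqref{another some new} and $\cosh y \le e^y$,
\[
W_{\infty,2^s}^{(x_0)}(K(\gamma)) = \frac{e^{2^s g_{K(\gamma)}(x_0)}}{\cosh(2^s g_{E_s}(x_0))} \ge e^{2^s[g_{K(\gamma)}(x_0) - g_{E_s}(x_0)]}.
\]

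The second step is a Harnack comparison on $\Omega_{x_0}$. A geometric lemma I would verify first is: for every $s \ge s_0$, $L_{x_0}$ coincides with the gap of $E_s$ containing $x_0$, and in particular $L_{x_0} \cap E_s = \emptyset$; together with $\overline{\mathbb{C}}\setminus[0,1] \subset \overline{\mathbb{C}}\setminus E_s$ this yields $\Omega_{x_0} \subset \overline{\mathbb{C}}\setminus E_s$. Consequently $h_s(z) := g_{K(\gamma)}(z) - g_{E_s}(z)$ is positive harmonic on $\Omega_{x_0}$ (nonnegativity being the minimum principle on $\overline{\mathbb{C}}\setminus E_s$), with $h_s(\infty) = \log[\operatorname{Cap}(E_s)/\operatorname{Cap}(K(\gamma))] > 0$. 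Harnack's inequality applied to $h_s$ at the pair $(x_0, \infty)$ in $\Omega_{x_0}$ gives $h_s(x_0) \ge \tau_{x_0}\, h_s(\infty)$. Multiplying by $2^s$, inserting the capacity identity, and using $c_{2^{s+1}} \ge c_{2^s}$ yield
\[
W_{\infty,2^s}^{(x_0)}(K(\gamma)) \ge e^{2^s h_s(x_0)} \ge \Bigl(\frac{3\, c_{2^{s+1}}}{2}\Bigr)^{\tau_{x_0}} \ge c_{2^s}^{\tau_{x_0}},
\]
which is \eqref{lower sub 111} and so proves (ii). When $x_0$ lies in an unbounded gap, $L_{x_0}$ is unbounded, $\Omega_{x_0} = \overline{\mathbb{C}}\setminus[0,1]$, and $s_0 \le 1$, so the same chain of inequalities proves the $n = 2^s$ case of (i).

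The main obstacle is promoting (i) from $n = 2^s$ to every $n \in \mathbb{N}$. For $2^s \le n < 2^{s+1}$, the identity \eqref{another some new} is unavailable: the alternating set of $T_{n,K(\gamma)}/T_{n,K(\gamma)}(x_0)$ on the period-$n$ set $K_n$ contains endpoints of components of $K_n$ that need not belong to $K(\gamma)$. My plan is to combine the Harnack bound above with a second Harnack estimate for the auxiliary harmonic function
\[
z \mapsto \log\|T_{n,K(\gamma)}\|_{K(\gamma)} - \log|T_{n,K(\gamma)}(z)| + n\, g_{K(\gamma)}(z)
\]
on $\overline{\mathbb{C}}\setminus[0,1]$ — which is nonnegative by Bernstein--Walsh and equals $\log W_{\infty,n}(K(\gamma))$ at infinity — and to bridge the bound to $\|R_{n,K(\gamma)}^{(x_0)}\|_{K(\gamma)}$ via the degree monotonicity $\|R_{n,K(\gamma)}^{(x_0)}\|_{K(\gamma)} \ge \|R_{2^{s+1},K(\gamma)}^{(x_0)}\|_{K(\gamma)}$ together with an alternation-based comparison that absorbs the error factor $e^{-(2^{s+1}-n)g_{K(\gamma)}(x_0)}$ into the capacity gain at level $s+1$. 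Once (i) and (ii) are in place, the ``in particular'' conclusion is immediate: if $(c_n)$ is monotone increasing and unbounded, then $c_{2^s}^{\tau_{x_0}} \to \infty$, and the subsequence bound forces $\sup_n W_{\infty,n}^{(x_0)}(K(\gamma)) = \infty$ for every $x_0 \in \mathbb{R}\setminus K(\gamma)$.
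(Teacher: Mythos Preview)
Your argument for part~(ii), and for the subsequence $n=2^s$ in part~(i), is essentially the paper's: you work with $h_s=g_{K(\gamma)}-g_{E_s}$ where the paper works with $H_n=g_{K(\gamma)}-g_{F_n}$, and since $E_s=K_{2^s}=F_{2^s}$ for the relevant $s$ these coincide. The geometric lemma you flag ($L_{x_0}\cap E_s=\emptyset$ for $s\ge s_0$) does hold, because every endpoint of a gap of $E_s$ is an endpoint of some $I_{j,s}$ and hence lies in $K(\gamma)$, forcing the gap of $E_s$ through $x_0$ to have the same endpoints as $L_{x_0}$.

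The genuine gap is your handling of general $n$ in part~(i). You declare the identity $R_{n,K(\gamma)}^{(x_0)}=T_{n,K(\gamma)}/T_{n,K(\gamma)}(x_0)$ ``unavailable'' for $n\ne 2^s$ and then sketch a workaround. This is a misdiagnosis: when $x_0$ lies outside the convex hull $[0,1]$, the $n+1$ alternation points of $T_{n,K(\gamma)}$ lie in $K(\gamma)$ itself (not merely in $K_n$) and all sit on one side of $x_0$, so they form an $x_0$-alternating set and the identity holds for \emph{every} $n$ (this is \cite[Theorem~3.6]{CSZ5}, and is classical). With it, your own Harnack argument runs verbatim with $E_s$ replaced by $K_n\subset[0,1]$, giving \eqref{lower sub 11} for all $n$ --- exactly the paper's route. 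Your proposed workaround is not only unnecessary but not viable as stated: the degree-monotonicity step produces
\[
W_{\infty,n}^{(x_0)}(K(\gamma))\ \ge\ e^{-(2^{s+1}-n)\,g_{K(\gamma)}(x_0)}\,W_{\infty,2^{s+1}}^{(x_0)}(K(\gamma)),
\]
and the prefactor can be as small as $e^{-2^{s}g_{K(\gamma)}(x_0)}$, which is not dominated by any fixed power of $c_{2^{s+2}}/c_n$ uniformly in $x_0$ (since $g_{K(\gamma)}(x_0)\to\infty$ as $|x_0|\to\infty$); and the Bernstein--Walsh Harnack estimate you propose yields, without the identity, only an upper bound for $W_{\infty,n}^{(x_0)}$, not a lower one.
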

\begin{proof}
As in the proof of Theorem \ref{t1}, for each $n\in \mathbb N$, let 
\begin{equation}\label{gamma def2}
\gamma_n=\frac{c_{2^{n+1}}}{6 c_{2^n}^2}.
\end{equation} 
So $K(\gamma)$ is non-polar and $|K(\gamma)|=0.$

Let $x_0\in\mathbb R\setminus K(\gamma)$ and $n\geq 2$.
Let 
\begin{equation}\label{fdef}
F_n:= \{ z\in \mathbb C: R_{n,K(\gamma)}^{(x_0)}(z)\in [-\|R_{n,K(\gamma)}^{(x_0)}\|_{K(\gamma)},\|R_{n,K(\gamma)}^{(x_0)}\|_{K(\gamma)}] \}
\end{equation}
Then (see \cite[Proposition 3.10]{CSZ5}), $F_n$ is a period-$d_n$ set where $d_n:= \deg{R_{n,K(\gamma)}^{(x_0)}}$, the gap $L_{x_0}$ containing $x_0$ (and also $x_0$ itself) does not intersect with $F_n$ and $K(\gamma)\subset F_n$. For $n=1$, if $x_0$ is in a bounded gap of $K(\gamma)$ then $R_{1,K(\gamma)}^{(x_0)}\equiv 1$, see, e.g., \cite[Example 4.3]{CSZ5}. So, we do not define $F_1$ for such $x_0$. If $x_0$ is in an unbounded gap then $\deg R_{1,K(\gamma)}^{(x_0)}=1$ in view of \cite[Theorem 3.6]{CSZ5} so $F_1$ is defined via \eqref{fdef} by substituting $n=1$ there. Since $F_n$ is a finite gap set, $\operatorname{supp}(\mu_{F_n})= F_n$ (see, e.g., \cite[Eq. (2.16)]{CSZ17} for the exact formulation of the measure). Since $|K(\gamma)|=0$, in view of \cite[Proposition 2.1]{CSZ3}, we have $\operatorname{Cap}(K(\gamma)) < \operatorname{Cap}(F_n)$. 

For $n\geq 1$, define
\begin{equation*}
K_n=\{z\in \mathbb C: T_{n,K(\gamma)}(z)\in [-\|T_{n,K(\gamma)}\|_{K(\gamma)},\|T_{n,K(\gamma)}\|_{K(\gamma)} ]  \}.
\end{equation*}

Arguing as in the proof of \cite[Proposition 4.1]{CSZ3}, set
\[
H_n(z):=g_{K(\gamma)}(z)-g_{F_n}(z).
\]
Then $H_n$ is harmonic on $\mathbb{C}\setminus F_n$. Since $g_{K(\gamma)}$ and $g_{F_n}$ have the same logarithmic behavior at infinity, $H_n$ is bounded near $\infty$ and in fact admits the finite limit
\[
H_n(\infty)=\log\!\Big(\tfrac{\operatorname{Cap}(F_n)}{\operatorname{Cap}(K(\gamma))}\Big).
\]
Thus $H_n$ extends to a bounded harmonic function on $\overline{\mathbb{C}}\setminus F_n$. Moreover, by \eqref{grnf} (with $K=F_n$ and $K=K(\gamma)$), both $g_{F_n}$ and $g_{K(\gamma)}$ are continuous nearly everywhere (n.e.) in $\mathbb{C}$ (i.e., outside a Borel polar set); see \cite[Theorem I.4.4]{ST97} and \cite[Theorems 4.2.5 and 4.4.9]{Ran95}. Therefore, the n.e. boundary values of $H_n$ on $F_n$ equal $g_{K(\gamma)}-g_{F_n}$; since $g_{F_n}=0$ n.e. on $F_n$ and finite unions of polar sets are polar \cite[Corollary 3.2.5]{Ran95}, $H_n$ has n.e. boundary values equal to $g_{K(\gamma)}$ on $F_n$. Hence, by \cite[Corollary 4.2.6 and Theorem 4.3.3]{Ran95}, for any $z\in\overline{\mathbb{C}}\setminus F_n$,
\begin{equation}\label{long eq}
g_{K(\gamma)}(z)-g_{F_n}(z)=H_n(z)=\int g_{K(\gamma)}(t)\, d\omega_{\Omega_{F_n}}(t;z).
\end{equation}

Substituting $z=\infty$ in \eqref{long eq}, we exactly get (see \cite[Proposition 4.1]{CSZ3}),
\begin{equation}\label{log exp}
H_n(\infty)=\log{\left(\frac{\operatorname{Cap}(F_n)}{\operatorname{Cap}(K(\gamma))} \right)}=\int g_{K(\gamma)}(t)\,d\mu_{F_n}(t)
\end{equation}

Substituting $z=x_0$ in \eqref{long eq}, we get
\begin{equation}\label{yet another}
H_n(x_0)=g_{K(\gamma)}(x_0)-g_{F_n}(x_0).
\end{equation}

Since the integrand in \eqref{long eq} is positive on a set of $\mu_{F_n}$-measure 1, it is also positive on a set of harmonic measure 1 for any $z$, by mutual absolute continuity. Hence $H_n$ is a positive harmonic function on $\overline{\mathbb C}\setminus F_n$.

We first deal with the unbounded gap case. Assume that $x_0\in \mathbb R\setminus [0,1]$. Then by \cite[Theorem 3.6]{CSZ5}, $R_{n,K(\gamma)}^{(x_0)}=T_{n,K(\gamma)}/{T_{n,K(\gamma)}(x_0)}$ is satisfied. Therefore for all $n\geq 1$, we have
\begin{equation}\label{inter iden}
K_n=F_n\qquad\text{if }x_0\in\mathbb{R}\setminus[0,1].
\end{equation}

and so in view of \cite[Eq. (1.3)]{CSZ17}, this implies that
\begin{equation*}
F_n\subset[0,1]\qquad\text{if }x_0\in\mathbb{R}\setminus[0,1].
\end{equation*}

Since $F_n\cap L_{x_0}=\emptyset$, this implies that for all $n\in\mathbb N$, we have
\begin{equation*}
\Omega_{x_0}=(\overline{\mathbb C}\setminus[0,1])\cup L_{x_0}=(\overline{\mathbb C}\setminus[0,1]) \subset \overline{\mathbb C}\setminus F_n.
\end{equation*}
By the monotonicity property of Harnack distance (see \cite[Corollary 1.3.7]{Ran95}), this implies
\begin{equation}\label{harineq}
\tau_{\overline{\mathbb C}\setminus F_n}(x_0,\infty)\leq \tau_{\Omega_{x_0}}(x_0,\infty).
\end{equation}

Note that since $R_{n,K(\gamma)}^{(x_0)}=T_{n,K(\gamma)}/{T_{n,K(\gamma)}(x_0)}$ holds for all $n$, we have $d_n=n$ for this case. Using Harnack’s inequality in \eqref{longeq1}, the inequality \eqref{harineq} in \eqref{longeq2}, the identity \eqref{log exp} in \eqref{longeq3}, and \eqref{inter iden} in \eqref{longeq4}, and then rewriting the logarithm via \eqref{somein} for $K=K(\gamma)$ in \eqref{somelong} and applying the definition of $\tau_{x_0}$ in \eqref{longeq5} we obtain
\begin{align}
\exp[{n H_n(x_0)}]&\geq  \exp{\left[\tau_{\overline{\mathbb C}\setminus F_n}^{-1}(x_0,\infty)n H_n(\infty)\right]}\label{longeq1}\\
&\geq \exp{\left[\tau_{\Omega_{x_0}}^{-1}(x_0,\infty)n H_n(\infty)\right]}\label{longeq2}\\
&=\exp{\left[\tau_{\Omega_{x_0}}^{-1}(x_0,\infty)n \log{\left(\frac{\operatorname{Cap}(F_n)}{\operatorname{Cap}(K(\gamma))}\right)}\right]}\label{longeq3}\\
&=\exp{\left[\tau_{\Omega_{x_0}}^{-1}(x_0,\infty)n \log{\left(\frac{\operatorname{Cap}(K_{n})}{\operatorname{Cap}(K(\gamma))}\right)}\right]}\label{longeq4}\\
&=\exp{\left[\tau_{\Omega_{x_0}}^{-1}(x_0,\infty) \log{\left(\frac{W_{\infty,n}(K(\gamma))}{2}\right)}\right]}\label{somelong}\\
&=\left(\frac{1}{2}\right)^{\tau_{x_0}}\left[W_{\infty,n}(K(\gamma))\right]^{\tau_{x_0}}\label{longeq5}
\end{align}

On the other hand, using the definition of Widom factors in \eqref{longeq11}, rewriting $\|R_{n,K(\gamma)}^{(x_0)}\|_{K(\gamma)}$ in terms of Green function (see \cite[Eq. (3.10)]{CSZ5} where we use the fact $d_n=n$ here.) in \eqref{longeq21}, using positivity of $g_{F_n}(x_0)$ in \eqref{longeq31} and \eqref{yet another} in \eqref{longeq41}, we obtain

\begin{align}
W_{\infty,n}^{(x_0)}(K(\gamma))&=\exp[ng_{K(\gamma)}(x_0)] \|R_{n,K(\gamma)}^{(x_0)}\|_{K(\gamma)} \label{longeq11}\\
&= \frac{2\exp[ng_{K(\gamma)}(x_0)]}{\exp[n g_{F_n}(x_0)]+ \exp[-n g_{F_n}(x_0)]} \label{longeq21}\\
&\geq \exp[n (g_{K(\gamma)}(x_0)-g_{F_n}(x_0))]\label{longeq31}\\
&= \exp[{n H_n(x_0)}].\label{longeq41}
\end{align}
Combining \eqref{longeq1}, \eqref{longeq5}, \eqref{longeq11} and \eqref{longeq41}, we obtain
\begin{equation}\label{ttt}
W_{\infty,n}^{(x_0)}(K(\gamma))\geq \left(\frac{1}{2}\right)^{\tau_{x_0}} \left[ W_{\infty,n}(K(\gamma)) \right]^{\tau_{x_0}},
\end{equation}

Since we use the same $\gamma$ here \eqref{gamma def2} and in the proof of Theorem \ref{t1}, it follows from \eqref{ineq wid l2} and \eqref{rrrr1} that
\begin{equation}\label{q1 e1}
W_{\infty, n}(K(\gamma))\geq W_{2,n}(\mu_{K(\gamma)})\geq  \sqrt{6} c_n.
\end{equation} 
Combining \eqref{ttt} and \eqref{q1 e1}, we obtain
\begin{equation}\label{finalcountdown}
W_{\infty,n}^{(x_0)}(K(\gamma))\geq \left(\frac{\sqrt{6} c_n}{2}\right)^ {{\tau_{x_0}}}.
\end{equation}
Since $0<\tau_{x_0}\leq 1$, the inequality \eqref{lower sub 11} follows from \eqref{finalcountdown}. 

Now, assume that $x_0\in (0,1)\setminus K(\gamma)$. Then there exists $s_0\in \mathbb N$ such that $x_0\in E_{s_0-1}$ but $x_0\not\in E_{s}$ for all $s\geq s_0$ where the sets $E_s$ are defined as in \eqref{inv eq}. As discussed above \eqref{another some new}, $R_{2^s, K(\gamma)}^{(x_0)}=\frac{T_{2^s,K(\gamma)}}{T_{2^s,K(\gamma)}(x_0)}$ holds in this case so for $s\geq s_0$,
\begin{equation*}
K_{2^s}=F_{2^s}\qquad\text{if }x_0\in(0,1)\setminus K(\gamma).
\end{equation*}
So
\begin{equation*}
F_{2^s}\subset[0,1]\qquad\text{if }x_0\in(0,1)\setminus K(\gamma).
\end{equation*}

and since $F_n\cap L_{x_0}=\emptyset$, this implies that for all $s\geq s_0$, we have
\begin{equation*}
\Omega_{x_0}=((\overline{\mathbb C}\setminus [0,1]) \cup L_{x_0} ) \subset \overline{\mathbb C}\setminus F_{2^s}.
\end{equation*}
By the monotonicity property of the Harnack distance, this implies
\begin{equation*}
\tau_{\overline{\mathbb C}\setminus F_{2^s}}(x_0,\infty)\leq \tau_{\Omega_{x_0}}(x_0,\infty).
\end{equation*}
So all the inequalities above hold for this case when we substitute $n=2^s$. That is, using \eqref{longeq1}, \eqref{longeq5}, \eqref{longeq11}, \eqref{longeq41} for $n=2^s$, we get 
\begin{equation}\label{o3}
W_{\infty,2^s}^{(x_0)}(K(\gamma))\geq \left(\frac{1}{2}\right)^{\tau_{x_0}} \left[ W_{\infty,2^s}(K(\gamma)) \right]^{\tau_{x_0}},
\end{equation}
Substituting $n=2^s$ in \eqref{q1 e1}, we obtain
\begin{equation}\label{o4}
W_{\infty, 2^s}(K(\gamma))\geq W_{2,2^s}(\mu_{K(\gamma)})\geq  \sqrt{6} c_{2^s}.
\end{equation}
Combining \eqref{o3} and \eqref{o4}, for $s\geq s_0$, we get
\begin{equation}\label{finalcountdown2}
W_{\infty,2^s}^{(x_0)}(K(\gamma))\geq \left(\frac{\sqrt{6} c_{2^s}}{2}\right)^ {{\tau_{x_0}}}.
\end{equation}
Since $0<\tau_{x_0}\leq 1$, the inequality \eqref{lower sub 111} follows from \eqref{finalcountdown2}. 

Since we assume $c_n$ is monotone increasing, if $(c_n)_{n=1}^\infty$ is unbounded then both \eqref{lower sub 11} and \eqref{lower sub 111} imply that $\sup_{n\in \mathbb N} W_{\infty,n}^{(x_0)}(K(\gamma))=\infty$ holds and this completes the proof.

\end{proof}

\section*{Acknowledgment}
The author thanks the American Institute of Mathematics for the hospitality and the stimulating
environment of the SQuaRE program.

	

\begin{thebibliography}{99}

\bibitem{Alp19}
Alpan G. Szegő’s condition on compact subsets of $\mathbb{C}$. Journal of Approximation Theory 2019; 245: 130–136. \url{https://doi.org/10.1016/j.jat.2019.05.002}

\bibitem{Alp22}
Alpan G. Widom factors for generalized Jacobi measures. Journal of Mathematical Analysis and Applications 2022; 511: 126098. \url{https://doi.org/10.1016/j.jmaa.2022.126098}

\bibitem{AlpGon16}
Alpan G, Goncharov A. Orthogonal polynomials for the weakly equilibrium Cantor sets. Proceedings of the American Mathematical Society 2016; 144: 3781–3795. \url{https://doi.org/10.1090/proc/13025}

\bibitem{AlpGon17}
Alpan G, Goncharov A. Orthogonal polynomials on generalized Julia sets. Complex Analysis and Operator Theory 2017; 11: 1845–1864. \url{https://doi.org/10.1007/s11785-017-0669-1}

\bibitem{AZ20a}
Alpan G, Zinchenko M. Sharp lower bounds for the Widom factors on the real line. Journal of Mathematical Analysis and Applications 2020; 484: 123729. \url{https://doi.org/10.1016/j.jmaa.2019.123729}

\bibitem{AZ20b}
Alpan G, Zinchenko M. On the Widom factors for $L_p$ extremal polynomials. Journal of Approximation Theory 2020; 259: 105480. \url{https://doi.org/10.1016/j.jat.2020.105480}

\bibitem{AZ24}
Alpan G, Zinchenko M. Lower bounds for weighted Chebyshev and orthogonal polynomials. Constructive Approximation 2025. \url{https://doi.org/10.1007/s00365-025-09717-4}

\bibitem{AndNaz18}
Andrievskii V, Nazarov F. On the Totik–Widom property for a quasidisk. Constructive Approximation 2019; 50: 497–505. \url{https://doi.org/10.1007/s00365-018-9452-4}

\bibitem{buch}
Buchecker B, Eichinger B, Zinchenko M. Asymptotics of $L^r$ extremal polynomials for $0<r\leq\infty$ on $C^{1+}$ Jordan regions. arXiv preprint 2025. \url{https://arxiv.org/abs/2502.17616}

\bibitem{char}
Charpentier S, Levenberg N, Wielonsky F. An extremal problem for the Bergman kernel of orthogonal polynomials. Constructive Approximation 2024; 61: 63–80. \url{https://doi.org/10.1007/s00365-023-09677-7}


\bibitem{Chr12}
Christiansen JS. Szegő’s theorem on Parreau–Widom sets. Advances in Mathematics 2012; 229: 1180–1204. \url{https://doi.org/10.1016/j.aim.2011.09.012}

\bibitem{CR24}
Christiansen JS, Rubin O. Chebyshev polynomials related to Jacobi weights. arXiv preprint 2024. \url{https://arxiv.org/abs/2409.02623}

\bibitem{CER24}
Christiansen JS, Eichinger B, Rubin O. Extremal polynomials and sets of minimal capacity. Constructive Approximation 2024. \url{https://doi.org/10.1007/s00365-024-09690-4}

\bibitem{CSZ11}
Christiansen JS, Simon B, Zinchenko M. Finite gap Jacobi matrices, II. The Szegő class. Constructive Approximation 2011; 33: 365–403. \url{https://doi.org/10.1007/s00365-010-9094-7}

\bibitem{CSYZ19}
Christiansen JS, Simon B, Yuditskii P, Zinchenko M. Asymptotics of Chebyshev polynomials, II. DCT subsets of $\mathbb{R}$. Duke Mathematical Journal 2019; 168: 325–349. \url{https://doi.org/10.1215/00127094-2018-0045}

\bibitem{CSZ17}
Christiansen JS, Simon B, Zinchenko M. Asymptotics of Chebyshev polynomials, I. Subsets of $\mathbb{R}$. Inventiones Mathematicae 2017; 208: 217–245. \url{https://doi.org/10.1007/s00222-016-0689-x}

\bibitem{CSZ3}
Christiansen JS, Simon B, Zinchenko M. Asymptotics of Chebyshev polynomials, III. Sets saturating Szegő, Schiefermayr, and Totik–Widom bounds. In: Kurasov P et al., editors. Analysis as a Tool in Mathematical Physics – in Memory of Boris Pavlov. Operator Theory: Advances and Applications, vol. 276. Basel, Switzerland: Birkhäuser, 2020, pp. 231–246.
\url{https://doi.org/10.1007/978-3-030-31531-3_15}

\bibitem{CSZ4}
Christiansen JS, Simon B, Zinchenko M. Asymptotics of Chebyshev polynomials, IV. Comments on the complex case. Journal d’Analyse Mathématique 2020; 141: 207–223. \url{https://doi.org/10.1007/s11854-020-0120-9}

\bibitem{CSZ5}
Christiansen JS, Simon B, Zinchenko M. Asymptotics of Chebyshev polynomials, V. Residual polynomials. Ramanujan Journal 2023; 61: 251–278. \url{https://doi.org/10.1007/s11139-021-00500-0}

\bibitem{CSZ6}
Christiansen JS, Simon B, Zinchenko M. Bounds for weighted Chebyshev and residual polynomials on subsets of $\mathbb{R}$. arXiv preprint 2025. \url{https://arxiv.org/abs/2502.11424}

\bibitem{Davis}
Davis PJ. Interpolation and Approximation. New York, NY, USA: Dover Publications, 1975.

\bibitem{dris}
Driscoll TA, Toh KC, Trefethen LN. From potential theory to matrix iterations in six steps. SIAM Review 1998; 40: 547–578. \url{https://doi.org/10.1137/S0036144596305582}

\bibitem{ech}
Eichinger B, Yuditskii P. Pointwise Remez inequality. Constructive Approximation 2021; 54: 529–554. \url{https://doi.org/10.1007/s00365-021-09562-1}

\bibitem{Gonc14}
Goncharov A. Weakly equilibrium Cantor-type sets. Potential Analysis 2014; 40: 143–161. \url{https://doi.org/10.1007/s11118-013-9344-y} 

\bibitem{GonHat15}
Goncharov A, Hatinoğlu B. Widom factors. Potential Analysis 2015; 42: 671–680. \url{https://doi.org/10.1007/s11118-014-9452-3}


\bibitem{Kuij}
Kuijlaars ABJ. Convergence analysis of Krylov subspace iterations with methods from potential theory. SIAM Review 2006; 48: 3–40. \url{https://doi.org/10.1137/S0036144504445376} 

\bibitem{NSZ21}
Novello G, Schiefermayr K, Zinchenko M. Weighted Chebyshev polynomials on compact subsets of the complex plane. In: Gesztesy F, Martínez-Finkelshtein A, editors. From Operator Theory to Orthogonal Polynomials, Combinatorics, and Number Theory. Operator Theory: Advances and Applications, vol. 285. Cham, Switzerland: Birkhäuser, 2021, pp. 357–370. \url{https://doi.org/10.1007/978-3-030-75425-9_18}

\bibitem{Peh1}
Peherstorfer F. Deformation of minimal polynomials and approximation of several intervals by an inverse polynomial mapping. Journal of Approximation Theory 2001; 111: 180–195. \url{https://doi.org/10.1006/jath.2001.3571}

\bibitem{PehYud03}
Peherstorfer F, Yuditskii P. Asymptotic behavior of polynomials orthonormal on a homogeneous set.  Journal d’Analyse Mathématique 2003; 89: 113–154. \url{https://doi.org/10.1007/BF02893078}


\bibitem{Ran95}
Ransford T. Potential Theory in the Complex Plane. Cambridge, UK: Cambridge University Press, 1995.

\bibitem{ST97}
Saff EB, Totik V. Logarithmic Potentials with External Fields. Berlin, Germany: Springer-Verlag, 1997.

\bibitem{Sch08}
Schiefermayr K. A lower bound for the minimum deviation of the Chebyshev polynomial on a compact real set. East Journal on Approximations 2008; 14: 223–233.

\bibitem{Sch11}
Schiefermayr K. A lower bound for the norm of the minimal residual polynomial. Constructive Approximation 2011; 33: 425–432. \url{https://doi.org/10.1007/s00365-010-9119-2}

\bibitem{Sch17}
Schiefermayr K. The growth of polynomials outside of a compact set—the Bernstein–Walsh inequality revisited. Journal of Approximation Theory 2017; 217: 9–18. \url{https://doi.org/10.1016/j.jat.2017.07.007}


\bibitem{SZ21}
Schiefermayr K, Zinchenko M. Norm estimates for Chebyshev polynomials, I. Journal of Approximation Theory 2021; 265: 105561. \url{https://doi.org/10.1016/j.jat.2020.105561} 

\bibitem{Sim11}
Simon B. Szegő’s Theorem and Its Descendants: Spectral Theory for $L^2$ Perturbations of Orthogonal Polynomials. Princeton, NJ, USA: Princeton University Press, 2011.

\bibitem{SodYud}
Sodin M, Yuditskii P. Functions that deviate least from zero on closed subsets of the real axis. Algebra i Analiz 1992; 4: 1–61; translation in St. Petersburg Mathematical Journal 1993; 4: 201–249.

\bibitem{SodYud2}
Sodin M, Yuditskii P. Almost periodic Jacobi matrices with homogeneous spectrum, infinite dimensional Jacobi inversion, and Hardy spaces of character-automorphic functions. The Journal of Geometric Analysis 1997; 7: 387–435. \url{https://doi.org/10.1007/BF02921627}


\bibitem{ST92}
Stahl H, Totik V. General Orthogonal Polynomials. Cambridge, UK: Cambridge University Press, 1992.

\bibitem{Tot11}
Totik V. The norm of minimal polynomials on several intervals. Journal of Approximation Theory 2011; 163: 738–746. \url{https://doi.org/10.1016/j.jat.2010.07.002}


\bibitem{Tot14}
Totik V. Chebyshev polynomials on compact sets. Potential Analysis 2014; 40: 511–524. \url{https://doi.org/10.1007/s11118-013-9357-6}

\bibitem{Wid67}
Widom H. Polynomials associated with measures in the complex plane. Journal of Mathematics and Mechanics 1967; 16: 997–1013.

\bibitem{Wid69}
Widom H. Extremal polynomials associated with a system of curves in the complex plane. Advances in Mathematics 1969; 3: 127–232. \url{https://doi.org/10.1016/0001-8708(69)90005-X}

\end{thebibliography}
\end{document}